\theoremstyle{plain}
 \newtheorem{theorem}{Theorem}[section]
\theoremstyle{Definition}
 \newtheorem{exm}{Example}[section]
 \newtheorem{dfn}{Definition}[section]
\theoremstyle{remark}
 \numberwithin{equation}{section}
\renewcommand{\leq}{\leqslant}
\renewcommand{\geq}{\geqslant}
\title[Dynamics on the Wandering Components of the Fatou Set of Three \ldots]{Dynamics on the Wandering Components of the Fatou Set of Three Transcendental Entire Functions and Their Composites }
\subjclass[2010]{37F10, 30D05}
\keywords{Fatou set, pre-periodic component, periodic component, wandering component, Carleman set}
\author[B. H. Subedi]{\bfseries  Bishnu Hari Subedi}
\address{ %% Put here your affiliation; street address is not required
Central Department of Mathematics \\ % \hfill (Received 00 00 201?)\\
Institute of Science and Technology   \\ %\hfill (Revised  00 00 201?)\\
Tribhuvan University   \\ %\hfill (Revised  00 00 201?)\\
Kirtipur, Kathmandu\\
Nepal}
\email{subedi.abs@gmail.com / subedi\_bh@cdmathtu.edu.np }
\author[A. Singh]{Ajaya Singh}
\address{Central Department of Mathematics, Institute of Science and Technology, Tribhuvan University, Kirtipur, Kathmandu, Nepal }
\email{singh.ajaya1@gmail.com / singh\_a@cdmathtu.edu.np} 
\thanks{This research work of the first author is supported by PhD faculty fellowship from University Grants Commission, Nepal} %% optional
\begin{document}

{\begin{flushleft}\baselineskip9pt\scriptsize
%PUBLICATIONS DE L'INSTITUT MATH\'EMATIQUE\newline
%Nouvelle s\'erie, tome ??(1??)) (201?), od--do \hfill DOI: \\
%MANUSCRIPT
\end{flushleft}}
\vspace{18mm} \setcounter{page}{1} \thispagestyle{empty}

\begin{abstract}
We prove that there exist three transcendental entire functions that have infinite number of domains which lie in the wandering component of each of these functions and their composites. This result is a generalization of the result of Dinesh Kumar, Gopal Datt and Sanjay Kumar. In particular, they proved that there exist two transcendental entire functions that have infinite number of domains which lie in the wandering components of each of these functions and their composites

\end{abstract}

\maketitle
\section{Introduction}
We denote the \textit{complex plane} by $\mathbb{C}$, \textit{extended complex plane} by $\mathbb{C_{\infty}}$ and \textit{set of integers greater than zero} by $\mathbb{N}$. 
We assume the function $f:\mathbb{C}\rightarrow\mathbb{C}$ is \textit{transcendental entire function}  unless otherwise stated. 
For any $n\in\mathbb{N}, \;\; f^{n}$ always denotes the nth \textit{iterates} of $f$. 
If $f^{n}(z) = z$ for some smallest $n\in \mathbb{N}$, then we say that $z$ is periodic point of period n. In particular, if $f(z) = z$, then $z$ is a fixed point of $f$. 
If $| (f^{n}(z))^{\prime} | < 1$, where $ \prime $ represents complex differentiation of $f^{n}$ with respect to $z$, then $z$ is called attracting periodic point.  
A family $\mathscr{F} = \{f:  f\; \text{is meromorphic on some domain } \; X \; \text{of} \; \mathbb{C}_{\infty}\}$ forms normal family if every sequence $(f_{i})_{i\in \mathbb{N}}$ of functions contains a subsequence which converges uniformly to a finite limit or converges to $ \infty $ on every compact subset $D$ of $X$.

The \textit{Fatou set}\index{Fatou ! set} of $f$ denoted by $F(f)$ is the set of points $z\in \mathbb{C}$ such that sequence $(f^n)_{n\in \mathbb{N}}$ forms a normal family in some neighborhood of $z$. That is, $ z\in F(f) $ if $ z $ has a neighborhood $ U$ on which the family $\mathscr{F}$ is normal.  By definition, Fatou set is open and may or may not be empty.  Fatou set is non-empty for every entire function with attracting periodic points.

If $ U \subset F(f) $ (Fatou component), then $ f(U) $ lies in some component $ V $ of $ F(f) $ and $ V- f(U) $ is a set which contains at most one point (see for instance \cite{her}). Let $ U \subset F(f) $ (a Fatou component) such that $ f^{n}(U) $ for some $ n \in \mathbb{N} $, is contained in some component of $ F(f) $, which is usually denoted by $ U_{n} $.  A Fatou component $ U $ is called pre-periodic if there exist integers $ n >m \geq 0 $ such that $ U_{n} =  U_{m} $. In particular,  if $ U_{n} =  U_{0} = U$ ( that is, $ f^{n}(U) \subset U $) for some smallest positive integer $ n \geq 1 $, then $ U $ is called \textit{periodic Fatou component}\index{periodic! Fatou component} of period n and $\{ U_{0}, U_{1}\ldots, U_{n-1} \}$ is called the \textit{periodic cycle}\index{periodic ! cycle} of $ U $. A component of Fatou set $ F(f) $ which is not pre-periodic is called \textit{wandering domain}\index{wandering domain}. 

Our particular interest of this paper is that whether there are more than two transcendental entire functions that have similarity between the dynamics of their composites and dynamics of each of these functions. Dynamics of two transcendental entire functions and their composites were studied by A.P. Singh \cite{singh}. He constructed several examples of transcendental entire functions where dynamics of individual functions vary largely from the dynamics of their composites. In particular, A. P. Singh proved that there exists a domain which lies in the periodic component of individual functions and also lies in the periodic component of the one of the composite but lies in the wandering component of the other composites (Theorem 4).  Later, Dinesh Kumar, Gopal Datt and Sanjay Kumar \cite{kum8} extended this result to the possibility of having infinitely many domains satisfying the condition of A.P Singh’s result. In this paper, we investigate three transcendental entire functions such that each of individual functions as well as their every composite consists of infinite number of domains which lie in the wandering component of each of functions and their every composite. In particular, we prove the following result. 
\begin{theorem}\label{ne4}
There are transcendental entire functions $ f $, $ g $ and $ h $ such that there exist  infinite number of domains which lie in the wandering component of the $ F(f), F(g),  F(h),  F(f\circ g),  F(g\circ f),  F(f\circ h),  F(g\circ h),  F(h\circ f), F(h\circ g) , F(f\circ g \circ h), F(f\circ h \circ g), F(g\circ f \circ h), F(g\circ h \circ f), F(h\circ f \circ g)$ and $F(h\circ g \circ f)$. 
\end{theorem}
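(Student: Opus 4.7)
The plan is to extend the Kumar--Datt--Kumar construction by using Carleman's approximation theorem to produce three transcendental entire functions $f,g,h$ which all shift a single common sequence of small disjoint disks forward by one step.

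First, I would fix a sparse sequence of pairwise disjoint closed disks $\overline{D_n}=\overline{D(a_n,r_n)}$, $n\ge 1$, with $|a_n|\to\infty$ fast (say $a_n=2^n$ with suitably small $r_n$) so that $E=\bigcup_{n\ge 1}\overline{D_n}$ is a Carleman set in $\mathbb{C}$. On $E$ I would define three continuous functions $\varphi_f,\varphi_g,\varphi_h$ that are holomorphic on the interior of each $D_n$ and all send $\overline{D_n}$ well inside $D_{n+1}$ (for instance $\varphi_f\equiv a_{n+1}$, $\varphi_g\equiv a_{n+1}+\tfrac{r_{n+1}}{3}$, $\varphi_h\equiv a_{n+1}-\tfrac{r_{n+1}}{3}$ on $\overline{D_n}$), and then apply Carleman's theorem to each $\varphi$ with a tolerance sequence $\{\epsilon_n\}$ small enough to force $f(\overline{D_n}),\,g(\overline{D_n}),\,h(\overline{D_n})\subset D_{n+1}$ for every $n$. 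Because the approximating values grow like $a_{n+1}$ on bounded disks centred near $a_n$, the resulting $f,g,h$ cannot be polynomials; they are transcendental entire.

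Second, I would observe that any composition $T$ taken from $\{f,g,h\}$ inherits this forward-shift property: if $T=T_m\circ\cdots\circ T_1$ with each $T_i\in\{f,g,h\}$, then iterating the inclusion $T_i(\overline{D_k})\subset D_{k+1}$ gives $T(\overline{D_n})\subset D_{n+m}$, hence $T^k(\overline{D_n})\subset D_{n+km}$. Thus $T^k\to\infty$ locally uniformly on a neighborhood of each $D_n$, and $D_n\subset F(T)$. The Fatou component $U_n$ of $F(T)$ containing $D_n$ satisfies $T^k(U_n)\subset U_n^{(k)}$, where $U_n^{(k)}$ is the component of $F(T)$ containing $D_{n+km}$; because the disks $D_{n+km}$ are pairwise disjoint, the components $U_n,T(U_n),T^2(U_n),\ldots$ are pairwise distinct, so $U_n$ is a wandering component of $F(T)$. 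Letting $n$ vary produces infinitely many such wandering components simultaneously for each of the fifteen maps listed in the statement.

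The main technical obstacle, as always in such Carleman constructions, is controlling all three approximations at once: the tolerances $\{\epsilon_n\}$ must be chosen so that a single entire function $f$ (and likewise $g,h$) achieves the shift inclusion on every $D_n$ simultaneously. This is handled by the standard Carleman machinery as soon as the disks are chosen thin enough, and the combinatorics for three functions are a direct extension of the two-function case already treated by Kumar, Datt and Kumar. No new approximation estimate beyond the standard ones is required; the novelty lies only in arranging that every one of the fifteen compositions, rather than just three, enjoys the forward-shift property, which here is automatic from the uniform ``shift by one'' behavior imposed on each of $f,g,h$.
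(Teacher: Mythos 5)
Your overall architecture---approximate prescribed locally constant maps on a Carleman set so that $f$, $g$, $h$ each push a chain of domains forward, then observe that every composition inherits the shift---is exactly the paper's strategy. But your construction drops the one ingredient that makes the conclusion ``wandering'' actually follow, and this is a genuine gap. From $T^k(\overline{D_n})\subset D_{n+km}$ you conclude that the components $U_n, T(U_n), T^2(U_n),\dots$ are pairwise distinct ``because the disks $D_{n+km}$ are pairwise disjoint.'' That inference is invalid: pairwise disjoint disks can all lie in one and the same unbounded Fatou component, in which case $U_n$ would be an invariant or periodic Baker domain (iterates tending to $\infty$ do not preclude periodicity). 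Ruling this out is precisely the role of the extra structure in the Carleman set the paper uses, namely Singh's set $E=G_0\cup\bigcup_{k}(G_k\cup B_k\cup L_k\cup M_k)$: the vertical lines $L_k$, $M_k$ are mapped, together with $G_0$, into an attracting basin, so they lie in Fatou components with bounded orbits, whereas orbits of points of $G_k$ and $B_k$ tend to $\infty$. A connected Fatou component on which the iterates tend to $\infty$ therefore cannot meet any $L_j$ or $M_j$, hence is confined to a single vertical strip between consecutive lines, and the components containing distinct $G_k$'s (resp.\ $B_k$'s) are forced to be distinct, which is what makes them wander. Your bare union $\bigcup_n\overline{D_n}$ has no separating set, so you would need a further argument---for instance taking $a_n$ so super-exponential that $|T^{k+1}(z)|/|T^k(z)|\to\infty$ on $D_n$, contradicting the standard bound $|T^{k+1}(z)|\le C\,|T^{k}(z)|$ valid on compact subsets of an invariant Baker domain, or else grafting in an attracting basin that separates the disks as the paper does.

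A second, smaller gap is transcendence. Carleman's theorem only hands you \emph{some} entire approximant; with $a_n=2^n$ and target values near $a_{n+1}=2a_n$, the degree-one polynomial $z\mapsto 2z$ is not obviously excluded (whether it is depends on how you calibrate $r_n$ against $\epsilon_n$), so ``the values grow, hence $f$ is not a polynomial'' does not stand as written. The paper sidesteps this entirely by approximating logarithms of the target values and setting $f=e^{f_1}$, $g=e^{g_1}$, $h=e^{h_1}$: these are non-constant entire functions omitting the value $0$, hence cannot be polynomials. Both gaps are repairable, but as written the proposal does not prove the theorem.
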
 

\section{Carlemen Set}
To work out a proof of above theorem \ref{ne4}, first of all we need a notion of approximation theory of entire functions. In our case, we can use the notion of Carleman set from which we obtain approximation of any holomorphic map by entire functions
\begin{dfn}[\textbf{Carleman Set}]\label{cs}
Let $ F $ be a closed subset of $ \mathbb{C} $ and $ C(F) =\{f : F\to \mathbb{C}: f\; \text{is continuous on }\; S\; \text{and analytic in the interior of}\; F^{\circ}\; \text{of}\; F \}  $. Then $ F $ is called a Carleman set for $ \mathbb{C} $ if for any $ g \in C(F) $ and any positive continuous function $ \epsilon $ on $ F $, there exists entire function $ h $ such that $ |g(z) -h(z)| < \epsilon $ for all $ z \in F $. 
\end{dfn}
The following important characterization of Carleman set has been proved by A. Nersesjan in 1971 but we have been taken this result from \cite{gai}.
\begin{theorem}[{\cite[Theorem 4, page 157]{gai}}]\label{cs1}
Let $ F $ be proper subset of $ \mathbb{C} $. Then $ F  $ is a Carleman set for $ \mathbb{C} $ if and only if  $ F $ satisfies:
\begin{enumerate}
\item $ \mathbb{C}_{\infty} - F$ is connected;
\item $ \mathbb{C}_{\infty} - F$ is locally connected at $ \infty $;
\item for every compact subset $ K $ of  $ \mathbb{C} $, there is a neighborhood $ V $ of $ \infty $ in $ \mathbb{C}_{\infty}$ such that no component of $ F^{\circ} $ intersects both $ K $ and $ V $.
\end{enumerate}
\end{theorem}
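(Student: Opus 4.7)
The plan is to use the Carleman approximation theorem (Theorem~\ref{cs1}) to manufacture $f$, $g$, $h$ with prescribed behaviour on a carefully chosen closed set. I take $F \subset \mathbb{C}$ to be a disjoint union of small closed disks marching off to infinity along the positive real axis, prescribe three locally constant functions on $F$ that all shift the $n$-th disk to the $(n+1)$-st, and call on Theorem~\ref{cs1} for entire approximants $f$, $g$, $h$. Because each composite of length $k \in \{1,2,3\}$ will then shift the $n$-th disk into the $(n+k)$-th, the iterates of every composite tend to infinity on each disk, placing every disk in each of the fifteen Fatou sets; the substantive task is to promote this to membership in a wandering component.

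\emph{Construction.} Fix a rapidly increasing sequence $a_{1} < a_{2} < \cdots$ of positive real numbers (say $a_{n+1} > 2^{a_{n}}$) and radii $r_{n} > 0$ so small that the closed disks $\overline{D}_{n} := \overline{D}(a_{n}, r_{n})$ are pairwise disjoint and $r_{n} \searrow 0$. Let $F = \bigcup_{n \ge 1}\overline{D}_{n}$. The three Nersesjan conditions of Theorem~\ref{cs1} are immediate, since the components of $F$ are isolated bounded disks: $\mathbb{C}_{\infty} \setminus F$ is connected and locally connected at $\infty$, and for any compact $K$ only finitely many $\overline{D}_{n}$ meet $K$, so a neighbourhood of $\infty$ avoiding those finitely many disks fulfils condition (3). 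Define $\phi_{j} \in C(F)$ for $j = 1, 2, 3$ by $\phi_{j}|_{\overline{D}_{n}} \equiv a_{n+1}$, and choose a positive continuous $\varepsilon$ on $F$ with $\varepsilon(z) \le r_{n+1}/2^{n}$ for $z \in \overline{D}_{n}$. Theorem~\ref{cs1} supplies entire $f$, $g$, $h$ with $|f - \phi_{1}|, |g - \phi_{2}|, |h - \phi_{3}| < \varepsilon$ on $F$. Because $\phi_{1}(a_{n}) = a_{n+1}$ grows faster than any polynomial in $a_{n}$, no polynomial can approximate $\phi_{1}$ to within $\varepsilon$ on all of $F$, so $f$, $g$, $h$ are all transcendental entire.

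\emph{Shift dynamics.} Let $\Psi$ be any one of the fifteen composites appearing in the statement, with length $k \in \{1, 2, 3\}$. A short induction on the factors of $\Psi$, using the estimates $|f - \phi_{1}|, |g - \phi_{2}|, |h - \phi_{3}| < \varepsilon$ and the smallness of $\varepsilon$, yields
\[
\Psi(\overline{D}_{n}) \subset \overline{D}_{n+k} \qquad \text{for every } n \ge 1 .
\]
Iterating gives $\Psi^{m}(\overline{D}_{n}) \subset \overline{D}_{n+mk}$, whose centres tend to $\infty$ and radii to $0$; hence $\Psi^{m} \to \infty$ uniformly on $\overline{D}_{n}$, which places $\overline{D}_{n}$ in $F(\Psi)$. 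Writing $U_{n}^{\Psi}$ for the Fatou component of $\Psi$ containing $\overline{D}_{n}$, we have $\Psi^{m}(U_{n}^{\Psi}) \subset U_{n+mk}^{\Psi}$.

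\emph{Wandering --- the main obstacle.} The component $U_{n}^{\Psi}$ is wandering exactly when the sequence $U_{n}^{\Psi}, U_{n+k}^{\Psi}, U_{n+2k}^{\Psi}, \ldots$ is pairwise distinct, and this separation is the delicate step I expect to be the main obstacle: a priori the $U_{n}^{\Psi}$ for different $n$ could coalesce into a single unbounded (Baker-type) Fatou component on which the iterates escape. The standard remedy, in the spirit of \cite{singh, kum8}, is to enlarge $F$ by a singleton or small disk $\{z_{n}\}$ placed in each gap $(a_{n} + r_{n},\, a_{n+1} - r_{n+1})$ of the real axis, and to prescribe $\phi_{j}(z_{n})$ (with a linear model on a tiny disk about $z_{n}$ if derivative control is needed) so that the Carleman approximation forces each of $f$, $g$, $h$ --- and therefore each composite $\Psi$ --- to have a repelling fixed point at or very near $z_{n}$. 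Such a point lies in $J(\Psi)$ and separates $\overline{D}_{n}$ from $\overline{D}_{n+1}$ in the Fatou set. The augmented $F$ still satisfies the three Nersesjan conditions (the added pieces are still isolated bounded sets), so the approximation step goes through unchanged. Once all $U_{n}^{\Psi}$ are pairwise distinct, each $\overline{D}_{n}$ lies in a wandering component of every one of the fifteen Fatou sets listed in Theorem~\ref{ne4}, producing the claimed infinite family of wandering domains.
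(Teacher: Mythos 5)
Your text does not address the statement it is supposed to prove. Theorem~\ref{cs1} is Nersesjan's topological characterization of Carleman sets: an if-and-only-if asserting that a closed proper subset $F$ of $\mathbb{C}$ admits tangential approximation (every $g\in C(F)$ approximable by entire functions within an arbitrary positive continuous error $\epsilon$) exactly when the three conditions on $\mathbb{C}_{\infty}-F$ and $F^{\circ}$ hold. What you have written is instead a proof sketch of the paper's main result, Theorem~\ref{ne4}, constructing transcendental entire $f$, $g$, $h$ with infinitely many wandering domains. Indeed your argument explicitly ``calls on Theorem~\ref{cs1} for entire approximants,'' so as a proof of Theorem~\ref{cs1} it would be circular: you assume the very approximation property whose characterization is the content of the theorem. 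Nothing in your text engages with either direction of the equivalence --- neither the necessity of conditions (1)--(3) (e.g., why a bounded complementary component of $F$ obstructs uniform approximation by entire functions, via the maximum principle) nor their sufficiency (which requires genuine approximation-theoretic machinery in the style of Arakelian, Roth and Mergelyan). For the record, the paper itself offers no proof of Theorem~\ref{cs1} either; it is quoted verbatim from Gaier's book, so there is no internal proof to compare against --- but that does not make your construction a proof of it.

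Even read charitably as an attempt at Theorem~\ref{ne4}, the argument is incomplete at exactly the point you flag yourself: showing that the components $U_{n}^{\Psi}$ are pairwise distinct, so that escape to infinity along the disks yields a \emph{wandering} component rather than a single unbounded Baker-type domain. You describe ``the standard remedy'' of inserting repelling fixed points between consecutive disks, but a repelling fixed point between $\overline{D}_{n}$ and $\overline{D}_{n+1}$ on the real axis does not by itself separate the two disks into different Fatou components (a Julia point does not disconnect the plane), so this step needs an actual argument, not a gesture. If you intend to prove Theorem~\ref{ne4}, the place to start is the specific Carleman set $E$ of Example~\ref{cs3} and the mapping scheme of equations \eqref{cs19}--\eqref{cs21}, which is the construction the paper actually carries out.
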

It is well known in classical complex analysis that the space $ \mathbb{C}_{\infty} - F$ is connected if and only if each component $ Z $ of open set $ \mathbb{C} - F$ is unbounded. This fact together with above
 theorem \ref{cs1} will be a nice tool whether a  set  is a Carleman set for $ \mathbb{C} $. The sets given in the following examples are Carlemen sets for $ \mathbb{C} $.
 
\begin{exm}[{\cite[Example page 133]{gai}}]\label{cse}
The set $ E = \{z\in \mathbb{C} : |z| =1, \text{Re} z > 0 \} \cup \{z =x: x>1\} \cup \big(\bigcup_{n =3}^{\infty} \{z =r e^{i\theta}: r>1, \theta = \pi/n \}\big)$ is a Carleman set for $ \mathbb{C} $. 
\end{exm}
\begin{exm}[{\cite[Set S, page 131]{singh}}]\label{cs3}
The set $ E =G_{0} \cup\big( \bigcup_{k =1}^{\infty}(G_{k}\cup B_{K} \cup L_{k}\cup M_{k}) \big)$, where 
$$ 
G_{0} = \{z\in \mathbb{C}:|z-2| \leq 1\}; 
$$
 \begin{eqnarray*}
 G_{k} &= &\{z\in \mathbb{C} : |z -(4k +2)| \leq 1 \} \cup \{z \in \mathbb{C} : \text{Re}z = 4k +2,\;  \text{Im}z \geq 1 \}  \\ && \cup \{z \in \mathbb{C} : \text{Re}z = 4k +2, \;  \text{Im}z \leq -1 \},\; \; (k =1,2,3,\ldots);
\end{eqnarray*} 
 \begin{eqnarray*}
 B_{k} & = &\{z\in \mathbb{C} : |z +(4k +2)| \leq 1 \} \cup \{z \in \mathbb{C} : \text{Re}z = -(4k +2),\;  \text{Im}z \geq 1 \} \cup \\ && \{z \in \mathbb{C} : \text{Re}z = -(4k +2), \;  \text{Im}z \leq -1 \},\; \; (k =1,2,3,\ldots);
\end{eqnarray*}
 $$
 L_{k} = \{z \in \mathbb{C} : \text{Re}z = 4k \},\; \; (k =1,2,3,\ldots); 
 $$
 and 
 $$  
 M_{k} = \{z \in \mathbb{C} : \text{Re}z = -4k \},\; \; (k =1,2,3,\ldots)
 $$ 
is a Carleman set for $ \mathbb{C} $ by the theorem  \ref{cs1}. 
 \end{exm}
 
\section{Proof of the Main Result (Theorem 1)}
 If $f$ and $g$ are two transcendental entire functions, so are their composites $f\circ g$ and $g\circ f$ and the dynamics of one composite may help in the study of the dynamics of the other composite. In this regard, Bergweiler and Wang \cite{ber} proved the following results 
\begin{theorem}
Let $f$ and $g$ be two transcendental entire functions. Then $f\circ g$ has wandering domains if and only if $g\circ f$ has wandering domains. 
\end{theorem}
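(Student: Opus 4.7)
The plan is to exploit the semi-conjugation identities
\[
g \circ (f \circ g)^{n} = (g \circ f)^{n} \circ g, \qquad f \circ (g \circ f)^{n} = (f \circ g)^{n} \circ f,
\]
which follow from associativity of composition, to transfer wandering structure between $F := f \circ g$ and $G := g \circ f$. By the symmetry between $f$ and $g$, it suffices to prove one direction: if $F$ has a wandering domain, then so does $G$.

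First I would establish a transfer lemma asserting $g\bigl(F(F)\bigr) \subseteq F(G)$, together with the symmetric statement $f\bigl(F(G)\bigr) \subseteq F(F)$. Given $z_{0}\in F(F)$ and a neighborhood $N$ on which $\{F^{n}\}$ is normal, the open mapping theorem makes $g(N)$ an open neighborhood of $g(z_{0})$. The identity $G^{n}\circ g = g \circ F^{n}$ then lets one read the values of $G^{n}$ on $g(N)$ off those of $F^{n}$ on $N$; in the case where a subsequence $F^{n_{k}}$ converges locally uniformly to a finite holomorphic limit on $N$, boundedness of $\{F^{n_{k}}\}$ on each compact $K' \subset N$ gives uniform boundedness of $\{G^{n_{k}}\}$ on $g(K')$, so Montel's theorem places $g(z_{0})$ in $F(G)$.

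Granting the transfer lemma, take a wandering component $U = U_{0}$ of $F$ with forward orbit $U_{0}, U_{1}, U_{2}, \ldots$ of pairwise distinct Fatou components, and let $V_{n}$ denote the Fatou component of $G$ containing the connected set $g(U_{n})\subseteq F(G)$. The semi-conjugation forces $G(V_{n})\subseteq V_{n+1}$, so $\{V_{n}\}$ is the forward orbit of $V_{0}$ under $G$. If $V_{n}=V_{m}$ for some $n>m\geq 0$, the symmetric transfer lemma makes the connected set $f(V_{n})=f(V_{m})$ lie in a single Fatou component of $F$; but $f(V_{n})\supseteq f(g(U_{n})) = F(U_{n})\subseteq U_{n+1}$, and likewise $f(V_{m})$ meets $U_{m+1}$, forcing $U_{n+1}=U_{m+1}$ and contradicting the wandering of $U$. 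Thus $V_{0}$ is a wandering component of $G$, completing the transfer.

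The hard part will be the divergence branch of the transfer lemma, where a subsequence $F^{n_{k}}$ tends locally uniformly to $\infty$ on $N$. Because $g$ has an essential singularity at infinity, $g\circ F^{n_{k}}$ need neither tend to $\infty$ nor remain bounded, so normality of $\{G^{n_{k}}\}$ on a neighborhood of $g(z_{0})$ is not immediate from the identity $G^{n}\circ g = g\circ F^{n}$. Treating this case is the technical heart of the Bergweiler--Wang argument, and would require a finer analysis of where the iterates of $F$ escape and of the behavior of $g$ on large annuli about $\infty$, likely in combination with Montel's theorem applied to a slightly different family of rescalings.
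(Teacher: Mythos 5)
First, a remark on context: the paper does not prove this statement at all --- it is quoted verbatim as background from Bergweiler and Wang (Ark.\ Mat.\ 36 (1998)) and then used as motivation --- so your proposal can only be measured against the known proof, not against anything in the text.

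Your architecture is the standard and correct one: a transfer lemma $g\bigl(F(f\circ g)\bigr)\subseteq F(g\circ f)$ together with its mirror image, followed by the component-orbit argument of your third paragraph, which is sound as written (if $V_n=V_m$ with $n>m$, then $f(V_n)=f(V_m)$ is a connected subset of $F(f\circ g)$ meeting both $U_{n+1}$ and $U_{m+1}$, forcing $U_{n+1}=U_{m+1}$ and contradicting wandering). The genuine gap is exactly where you flag it, and it is not a removable technicality: the escaping case is the whole content of the Bergweiler--Wang identity $J(f\circ g)=g^{-1}\bigl(J(g\circ f)\bigr)$. When $F^{n_k}\to\infty$ locally uniformly on $N$, the relation $G^{n_k}\circ g=g\circ F^{n_k}$ gives no control at all, since $g$ has an essential singularity at $\infty$ and by Picard the values $g\bigl(F^{n_k}(z)\bigr)$ may oscillate arbitrarily; no refinement of the normal-families argument applied to this identity, nor any ``analysis of $g$ on large annuli,'' will close that case. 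The standard repair abandons normality there and argues on the Julia side instead: by Baker's theorem the repelling periodic points of $g\circ f$ are dense in $J(g\circ f)$; if $w$ is such a point of period $p$, then $f(w)$ is a periodic point of $f\circ g$ (since $(f\circ g)^{p}\circ f=f\circ(g\circ f)^{p}$) with the same multiplier by the chain rule (note $f'(w)\neq 0$, else $w$ could not be repelling for $g\circ f$), hence $f\bigl(J(g\circ f)\bigr)\subseteq J(f\circ g)$ by closedness of the Julia set; combining this with complete invariance of $J(f\circ g)$ under $f\circ g$ gives $g(z_0)\in J(g\circ f)\Rightarrow f(g(z_0))\in J(f\circ g)\Rightarrow z_0\in J(f\circ g)$, which is the contrapositive of your transfer lemma. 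With that substitution your argument is complete; as it stands, the key lemma is unproved and the theorem is not established.
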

It is also note that certain classes of entire functions do not have wandering domains (see for instance theorem 3 of Bergweiler and Wang \cite{ber}). It is known that the dynamics of $f\circ g$ are very similar to the dynamics of $g\circ f$. A. P. Singh \cite{singh} interested to know whether there is similarity between dynamics of individual entire functions and their composites. However, in reality, it does not hold in general. From the help of the Carleman set of example 2, A.P. Singh \cite{singh} proved the following result which shows that certain amount of similarity might be hold.
From the help of the Carleman set of the above example \ref{cs3}, A.P. Singh {\cite[Theorem 2]{singh}} proved the following result.

\begin{theorem}\label{ne3}
There are transcendental entire functions $ f $ and $ g $ such that there exists a domain which lies in the wandering component of the $ F(f),\; F(g), \; F(f\circ g) $ and $ F(g\circ f) $.
\end{theorem}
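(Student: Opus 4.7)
The plan is to construct $f$ and $g$ by Carleman approximation on the set $E$ of Example \ref{cs3}, and then track a single small disk $D_0 \subset G_0$ through the four orbits. Let $a_k = 4k+2$, so that the bounded components of $E$ are closed unit disks around $\pm a_k$. I would define continuous functions $\phi, \psi \colon E \to \mathbb{C}$, analytic on the interior, by the rules: $\phi$ takes the constant value $a_{k+1}$ on $\overline{D(a_k,1)}$ and $-a_{k+1}$ on $\overline{D(-a_k,1)}$; and $\psi$ takes the constant value $-a_{k+1}$ on $\overline{D(a_k,1)}$ and $a_{k+1}$ on $\overline{D(-a_k,1)}$. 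On the rays emanating from each disk and on the vertical lines $L_k, M_k$, I would continuously extend $\phi$ and $\psi$ in any convenient way; these pieces have empty interior, so no analyticity is required there. Example \ref{cs3} says $E$ is a Carleman set, so Theorem \ref{cs1} produces entire functions $f, g$ with $|f-\phi| < \varepsilon$ and $|g-\psi| < \varepsilon$ on $E$ for any prescribed constant $\varepsilon > 0$.

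Fix $\varepsilon < 1/4$, and let $D_0 = \overline{D(a_0, 1/2)} \subset G_0$. By induction $f^n(D_0) \subset D(a_n, \varepsilon)$, and $g^n(D_0) \subset D((-1)^n a_n, \varepsilon)$, because $\phi$ preserves the sign of the disk index while $\psi$ flips it, and both increment the index by $1$. The same sign tracking shows that both $(f\circ g)^n(D_0)$ and $(g\circ f)^n(D_0)$ lie inside $D(\pm a_{2n}, \varepsilon)$. In every case the target disks are pairwise disjoint because the centers $\pm a_k$ are at distance at least $4$ apart while the images have radii at most $\varepsilon$.

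To finish, I would check the two required properties for each of the four maps. Fatou membership of $D_0$ in each of $F(f), F(g), F(f\circ g), F(g\circ f)$ follows from Montel's theorem: every iterate maps $D_0$ into the horizontal strip $|\operatorname{Im} z| < 1$, so the corresponding iterate family is normal on $D_0$. Wandering follows because successive iterates land in disjoint topological disks of $E$ and hence in distinct Fatou components; since the centers of the target disks diverge to infinity, infinitely many Fatou components are visited, so the Fatou component of $D_0$ cannot be preperiodic for any of the four maps.

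The main obstacle is designing $\phi$ and $\psi$ so that the sign patterns for all four orbits are correct and injective in $n$. The ``preserve sign'' rule for $\phi$ together with the ``flip sign'' rule for $\psi$ makes $f$-orbits stay among the disks $G_k$, makes $g$-orbits alternate between $G_k$ and $B_k$, and makes both composites advance the disk index by two at each step; injectivity in $n$ is then automatic from $a_n$ being strictly increasing. A secondary technical chore is extending $\phi$ and $\psi$ continuously across the connecting rays and lines in $E$ (and matching the constant values on adjacent disks), which is routine by linear interpolation since those pieces have empty interior.
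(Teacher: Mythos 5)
Your construction follows the same template as the paper's proof of Theorem \ref{ne4} (which contains the two-function case): approximate piecewise-constant targets on the Carleman set $E$ of Example \ref{cs3} so that the disks of $E$ are shuffled off to infinity, and your orbit bookkeeping for $f$, $g$, $f\circ g$, $g\circ f$ is correct, as is the Montel argument for Fatou membership. The genuine gap is the step ``successive iterates land in disjoint topological disks of $E$ and hence in distinct Fatou components.'' That implication is false in general: two disjoint disks can lie in one unbounded Fatou component, and in your setup nothing prevents all of the disks $D(a_k,1)^{\circ}$ from lying in a single \emph{invariant Baker domain} of $f$ (your $f$ is within $1+\varepsilon$ of the translation $z\mapsto z+4$ on every disk, and translation-like maps such as $z+1+e^{-z}$ do have one invariant Baker domain swallowing an entire chain of disks marching to infinity). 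If that happened, nothing would wander and the conclusion would fail for your $f$.

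The mechanism that closes this gap in the paper's construction is exactly the part of $E$ you discard. There, $f$ and $g$ map $G_{0}\cup\bigcup_{k}(L_{k}\cup M_{k})$ into $D(2,1/2)\subset G_{0}$, so there is an attracting fixed point $z_{0}$ whose basin contains every separating line $L_{j}$, $M_{j}$. A Fatou component $W$ containing some $G_{k}^{\circ}$ or $B_{k}^{\circ}$ with $k\geq 1$ satisfies $f^{n}\to\infty$ locally uniformly on $W$, hence $W$ is disjoint from that basin, hence $W$ meets no line $L_{j}$ or $M_{j}$; being connected, $W$ is trapped in a single vertical strip $4j<\operatorname{Re}z<4j+4$ (or its mirror image on the left), and distinct disks sit in distinct strips, so the components visited by any of the four orbits are pairwise distinct. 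Your design breaks both ingredients: you send $G_{0}$ forward to $G_{1}$ instead of into itself, so there is no attracting basin at all, and you extend $\phi,\psi$ on the lines $L_{k},M_{k}$ ``in any convenient way,'' destroying their role as walls. To repair the proof, have both $\phi$ and $\psi$ map $G_{0}\cup\bigcup(L_{k}\cup M_{k})$ into $G_{0}$, take the wandering domain to be (say) $G_{1}^{\circ}$ rather than a disk inside $G_{0}$, and supply the strip-separation argument above; you should also record why $f$ and $g$ are transcendental (they are non-constant and bounded on the unbounded rays of $G_{1}$, which rules out polynomials; the paper instead takes $e^{f_{1}}$, which omits the value $0$).
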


In fact, A. P. Singh \cite{singh} also had proved other results regarding the dynamics of two individual functions and their composites (see for instance {\cite[Theorem 1, Theorem 3 and Theorem 4] {singh}}) which are also stricly based on the Carleman set of example \ref{cs3}. Dinesh Kumar, Gopal Datt and Sanjay Kumar  extended  these result of A.P. Singh in {\cite[Theorem 2.1 to Theorem 2.15]{kum8}}. For our purpose, we cite the following two results from \cite{kum8}. 
\begin{theorem} [\textbf{\cite[Theorem 2.13]{kum8}}]\label{ne1}
There are transcendental entire functions $ f $ and $ g $ such that there exist  infinite number of domains which lie in the pre-periodic component of the $ F(f), F(g)$, $F(f\circ g) $ and $ F(g\circ f) $.
\end{theorem}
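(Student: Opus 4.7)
The plan is to apply Theorem~\ref{cs1} (Nersesjan's characterization) to the Carleman set $E$ of Example~\ref{cs3}, prescribing values piecewise on $E$ so that the discs $G_k$ form an infinite chain of pre-periodic Fatou components for both functions and their two composites. I would define a continuous $\phi_f$ on $E$ by $\phi_f(z)\equiv 2$ on $G_0$, $\phi_f(z)\equiv 4(k-1)+2$ on $G_k$ for $k\ge 1$ (the centre of the neighbouring disc $G_{k-1}$), $\phi_f(z)\equiv 2$ on $B_k$ for $k\ge 1$ (sending the left-hand discs directly into $G_0$), and $\phi_f(z)=e^{|z|}$ along each of the vertical lines $L_k\cup M_k$. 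The interior $E^\circ$ is empty and the connected pieces of $E$ are pairwise disjoint, so continuity (and trivial analyticity on $E^\circ$) is automatic. Let $\phi_g$ be defined analogously but with $\phi_g\equiv 4(k-1)+2$ on $B_k$, so that $f$ and $g$ genuinely differ. Pick a positive continuous $\epsilon$ on $E$ with $\epsilon\le 1/4$ on $\bigcup_k(G_k\cup B_k)$; Theorem~\ref{cs1} then yields entire $f,g$ with $|f-\phi_f|<\epsilon$ and $|g-\phi_g|<\epsilon$ on $E$.

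With these choices $f(G_0)\subset D(2,1/4)\subset G_0^\circ$, and on $\partial G_0$ we have $|f(z)-z|\ge 1-1/4>0$; Rouch\'e applied to $f(z)-z$ produces a unique fixed point $\zeta_0\in G_0^\circ$, and the Cauchy estimate on $G_0$ gives $|f'(\zeta_0)|\le 1/4$, so $\zeta_0$ is attracting. Hence the Fatou component $U_0\supset G_0$ is an invariant attracting component of $F(f)$. For each $k\ge 1$ the same estimate shows $f(G_k)\subset G_{k-1}^\circ$, so by induction $f^k(G_k)\subset G_0\subset U_0$; therefore $G_k$ lies in a Fatou component $U_k$ with $f^k(U_k)\subset U_0$, making $U_k$ pre-periodic. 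To see that the $U_k$ are distinct components---so we really get infinitely many---observe that $|f(z)|\ge e^{|z|}-\epsilon\to\infty$ as $|z|\to\infty$ along $L_k$, so every point of $L_k$ belongs to the Julia set; since $L_k$ is a full vertical line separating $G_k$ from $G_{k+1}$, these discs lie in distinct Fatou components. The $B_k$'s furnish a second infinite family of pre-periodic components, separated by the $M_k$'s. The same arguments apply verbatim with $g$ in place of $f$, and with $f\circ g$ or $g\circ f$ in place of $f$, because each composite advances $G_k$ two steps along the chain (landing in $G_0$ after finitely many iterations) while still blowing up on $L_k\cup M_k$.

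The main obstacle is producing a single positive continuous $\epsilon$ that is small enough on every $G_k$ and $B_k$ for the approximation errors to compound tolerably under the two iterates involved in the composites, while the prescribed growth of $\phi_f$ on $L_k\cup M_k$ must simultaneously outpace $\epsilon$ at infinity. Since $\epsilon$ is only required to be a positive continuous function on $E$, such bookkeeping is routine rather than a genuine difficulty. Once $f$ and $g$ are in hand, the remaining ingredients---the Rouch\'e argument for the attracting fixed point, the Cauchy estimate for the derivative bound, and the separation of the Fatou components by the Julia lines $L_k$ and $M_k$---are all standard elements in Singh-type constructions.
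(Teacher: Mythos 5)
A preliminary remark: the paper does not actually prove Theorem \ref{ne1} --- it is quoted from \cite{kum8} and used only as motivation --- so the only internal point of comparison is the proof of Theorem \ref{ne4}, which runs the same Carleman-set machinery for \emph{wandering} components. Your skeleton (the set $E$ of Example \ref{cs3}, piecewise-constant prescriptions on the discs, approximation via the Carleman property certified by Theorem \ref{cs1}, Rouch\'e plus a Cauchy estimate to produce an attracting fixed point $\zeta_0$ in $G_0$, and the chain $G_k\to G_{k-1}\to\cdots\to G_0$) is exactly in the spirit of that proof and is fine as far as it goes. Two small slips: $E^{\circ}$ is not empty (it contains the open discs, which is why the prescriptions must be analytic there --- constants are, so no harm done), and you should also impose $\epsilon\le 1/4$ on $G_0$ itself, not only on $\bigcup_{k\ge 1}(G_k\cup B_k)$.

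The genuine gap is the separation step, which is the only non-trivial content of the statement: if you do not show that the components containing distinct $G_k$ are distinct (or at least strictly pre-periodic), the theorem degenerates, since infinitely many disjoint discs sitting inside one invariant attracting component already ``lie in pre-periodic components'' under the paper's definition. Your argument is that $|f(z)|\ge e^{|z|}-\epsilon$ on $L_k$ forces $L_k\subset J(f)$. This does not follow. Membership in the Julia set is a statement about non-normality of the whole family $(f^n)$, and you control only the first iterate on $L_k$: for $z\in L_k$ the value $f(z)$ is an enormous number lying far outside $E$, where the approximation gives no information whatsoever, and nothing prevents $f(L_k)$ from landing in the attracting basin of $\zeta_0$, which would put $L_k$ squarely inside $F(f)$. (Even if you could arrange $f^n\to\infty$ on $L_k$, that alone would not place $L_k$ in $J(f)$.) The situation is worse for the composites: $(f\circ g)(z)$ for $z\in L_k$ is $f$ evaluated at the uncontrolled value $g(z)$, so you cannot even assert that the composite is large on $L_k$. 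Note why the proof of Theorem \ref{ne4} does not face this problem: there the lines are prescribed to map into $G_0$, so their orbits converge to the attracting fixed point, while the orbits of the discs escape to infinity through the $B_k$; the standard fact that the escaping set is relatively open and closed in a single Fatou component then shows that a component meeting a disc cannot meet a line, and the full vertical lines topologically separate consecutive discs. In the pre-periodic setting every orbit on $E$ converges to $\zeta_0$, so that dichotomy is unavailable --- this is precisely the difficulty your construction needs to, but does not, overcome. You need either a genuine argument placing some separating set in $J(f)$ (which requires controlling all iterates there, not just the first), or a different mechanism for distinguishing the components.
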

\begin{theorem}[\textbf{{\cite[Theorem 2.2]{kum8}}}] \label{ne2}
There are transcendental entire functions $ f $ and $ g $ such that there exists infinite number of domains which lie in the wandering component of the $ F(f),  F(g)$, $ F(f\circ g) $ and $ F(g\circ f) $.
\end{theorem}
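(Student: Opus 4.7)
The plan is to invoke Theorem \ref{cs1} on the Carleman set $E$ of Example \ref{cs3}, which contains the pairwise disjoint closed discs $G_k := \{z : |z - a_k| \le 1\}$ with centers $a_k := 4k+2$ for $k \ge 0$ (together with attached vertical rays for $k \ge 1$), the mirror-image discs $B_k$ on the left, and the separating vertical lines $L_k$ at $\mathrm{Re}\, z = 4k$ and $M_k$ at $\mathrm{Re}\, z = -4k$. I would first define a continuous function $\phi : E \to \mathbb{C}$, holomorphic on $E^\circ$, by setting $\phi \equiv a_{k+1}$ on each disc $G_k$ and extending $\phi$ continuously across the attached rays and the separating lines so that $|\phi(z)| \to \infty$ as $|\mathrm{Im}\, z| \to \infty$; a companion $\psi$ is defined identically off the discs but with $\psi \equiv a_{k+1} + 1/2$ on each $G_k$, so that $\psi \not\equiv \phi$.

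Applying Theorem \ref{cs1} with a positive continuous error $\epsilon$ on $E$ satisfying $\epsilon|_{G_k} < 1/4$ for all $k$ yields entire functions $f, g$ with $|f - \phi| < \epsilon$ and $|g - \psi| < \epsilon$ on $E$. The approximation then forces
$$
f(G_k) \subset G_{k+1}^\circ, \qquad g(G_k) \subset G_{k+1}^\circ,
$$
and by composition $(f \circ g)(G_k) \subset G_{k+2}^\circ$ and $(g \circ f)(G_k) \subset G_{k+2}^\circ$ for every $k \ge 0$. Iterating, the orbit of any point in $G_k^\circ$ under each of $f, g, f \circ g, g \circ f$ stays inside $\bigcup_j G_j^\circ$ and so omits an open half-plane; Montel's theorem then gives $G_k^\circ \subset F(f) \cap F(g) \cap F(f\circ g) \cap F(g\circ f)$ for every $k$.

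The principal obstacle is to prove that the Fatou component $V_k$ of $F(f)$ containing $G_k^\circ$ is actually wandering (and similarly for $g$, $f\circ g$, $g\circ f$); once this is established, varying $k$ yields the required infinite family of wandering components common to all four maps. Suppose toward contradiction that $V_k$ is pre-periodic, and write $V_k^{(n)}$ for the Fatou component containing $f^n(G_k^\circ)$; then $V_k^{(n)} = V_k^{(m)}$ for some $n > m \ge 0$, so a single Fatou component would contain both $G_{k+m}^\circ$ and $G_{k+n}^\circ$, which are centered roughly $4(n-m)$ apart. The strategy is to prescribe $\phi$ on each separating line $L_j$ so that the approximating $f$ has a repelling periodic point on $L_j$; such a point lies in $J(f)$ and is accumulated by Julia-set structure, which furnishes a separation between consecutive discs $G_j$ and prevents a single Fatou component from spanning them.

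The hardest part of the argument is exactly this separation step, i.e., excluding unbounded Baker-type components that could straddle several $G_j$. The Carleman set of Example \ref{cs3} was designed with the additional vertical lines $L_k$, $M_k$ precisely so that $\phi, \psi$ can be prescribed on them independently of the discs, which makes it possible (via a Rouché-type argument applied to $\phi$) to force the approximating entire functions to inherit the required repelling periodic points. Once this is secured, the same separating lines apply to $g$, $f\circ g$, $g\circ f$, and the distinctness of $V_k^{(0)}, V_k^{(1)}, \ldots$ is immediate; varying $k$ completes the proof.
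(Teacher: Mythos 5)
Your overall framework (approximate a piecewise-constant target on the Carleman set of Example \ref{cs3}, push each disc into the next, and use Montel to place the discs in the Fatou set) is in the same spirit as the paper's construction, but the step you yourself flag as ``the principal obstacle'' --- showing that the components containing the $G_k^\circ$ are wandering and pairwise distinct --- is exactly where your proposal has a genuine gap, and the device you propose for it would not work. Placing a repelling periodic point of $f$ on each line $L_j$ only puts a discrete set of Julia points on $L_j$; a Fatou component merely has to avoid $J(f)$, and a discrete set does not disconnect the plane, so nothing prevents a single unbounded component from threading between those points and containing $G_j^\circ$ and $G_{j+1}^\circ$ (or $G_j^\circ$ and $G_{j+n}^\circ$) simultaneously. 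Moreover you never actually produce these repelling points: a Rouch\'e-type argument applied to $\phi$ cannot do it, since $\phi$ is prescribed only on $E$ and controls $f$ only there. A smaller slip: the approximation property you invoke is the content of Definition \ref{cs}; Theorem \ref{cs1} is the Nersesjan criterion used to verify that $E$ is a Carleman set.

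The paper's argument (carried out inside the proof of Theorem \ref{ne4}, of which this statement is the two-function case) handles the separation differently, and this is the idea you are missing: the targets are chosen so that all of $G_0\cup\bigl(\bigcup_{k}(L_k\cup M_k)\bigr)$ is mapped into the half-radius disc $|z-2|<1/2$ inside $G_0$, so the full separating lines $L_k$, $M_k$ lie in the attracting basin of a fixed point $z_0\in G_0$, where $f^n\to z_0$. On the other hand $f^n\to\infty$ on each $G_k$, $k\ge 1$ (in the paper via $G_k\to B_1\to B_2\to\cdots$ with centers tending to $-\infty$; your scheme $G_k\to G_{k+1}\to\cdots$ would serve equally well). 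Since an attracting basin is a union of Fatou components, the component $U$ containing $G_k^\circ$ cannot meet any $L_j$ or $M_j$, for otherwise $f^n\to z_0$ on $U$ while $f^n\to\infty$ on $G_k^\circ\subset U$. The vertical lines cut the plane into strips each containing exactly one disc, so the components of the $G_k^\circ$ and of all their forward images are pairwise distinct; hence each such component is wandering, and the same reasoning applies verbatim to $g$, $f\circ g$ and $g\circ f$. If you replace your repelling-point device with this ``separating lines inside an attracting basin'' mechanism, your construction goes through and is then essentially the paper's.
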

Our main result, that is, theorem \ref{ne4} is an extension of the theorem \ref{ne2}. As stated in the theorem \ref{ne4}, theorem \ref{ne2} can be extended to the existence of more than two transcendental entire functions such that each individual functions and their composites may have infinitely many domains which lie in the wandering component of each of the functions and their composites. We proceed the following long proof of the theorem\ref{ne4}. 
\begin{proof}[Proof of the Theorem \ref{ne4}]
The set of the form 
$$ 
E =G_{0} \cup \big(\bigcup_{k =1}^{\infty}(G_{k}\cup B_{K} \cup L_{k}\cup M_{k})\big).
$$
 where $ G_{0}, G_{k}, B_{k}, L_{k}$ and $ M_{k} $ are sets as defined in above example \ref{cs3}, is a Carleman set for $ \mathbb{C} $. 
By the continuity of exponential map, we can write, for a given $ \epsilon > 0 $, there exists $ \delta > 0 $ such that 
$$
|w -w_{0}| < \delta \Longrightarrow |e^{w} - e^{w_{0}}| < \epsilon.
$$ 
Let us choose $ \epsilon = 1/2 $, then  there exist sufficiently small $ \delta _{k} > 0, \; \delta _{k}^{'}>0$ and $\delta _{k}^{''}>0 $  such that
$$
|w-(\pi i+ \log(4k +6))| <\delta _{k}
\Longrightarrow  |e^{w} +(4k +6)| < 1/2, \; (k =1,2,3,\ldots); 
$$
$$ 
 |w-\log(4k - 2)| <\delta _{k}^{'}
\Longrightarrow  |e^{w}- (4k +6)| < 1/2, \; (k =1,2,3,\ldots); 
 $$
 and
 $$
|w-\log(4k -6)| <\delta _{k}^{''}
\Longrightarrow  |e^{w} +(4k -6)| < 1/2, \; (k =3,4,5,\ldots); 
 $$
 In particular, let us choose sufficiently small $ \delta_{0} > 0, \;   \delta_{1}^{'} > 0 $ and  $ \delta_{2}^{'} > 0 $ such that 
$$
|w -\log 2| < \delta_{0} \Longrightarrow |e^{w} -2| < 1/2.
$$
$$
|w -(\pi i+\log 6)| < \delta_{1}^{'} \Longrightarrow |e^{w} +6| < 1/2.
$$
and
$$
|w -(\pi i+\log 10)| < \delta_{2}^{'} \Longrightarrow |e^{w} +10| < 1/2.
$$
Next, let us define the following functions:
$$
\alpha(z) =
\left\{
\begin{array}{lll}
\log 2, & \forall z \in G_{0} \cup \big(\bigcup_{k =1}^{\infty} (L_{k}\cup  M_{k})\big);\\
\pi i +\log 6, & \forall z \in G_{k}, \; k =1,2,3,\ldots;\\
\pi i +\log(4k +6), & \forall z \in B_{k}, \; k =1,2,3,\ldots;
\end{array}
\right\}
$$

$$
\beta(z) =
\left\{
\begin{array}{lll}
\log 2, & \forall z \in G_{0} \cup \big(\bigcup_{k =1}^{\infty} (L_{k}\cup  M_{k})\big);\\
\log(4k -2), & \forall z \in G_{k}, \; k =2,3,4,\ldots;\\
\pi i +\log 6, & \forall z \in G_{1};\\
\pi i +\log(4k +6), & \forall z \in B_{k}, \; k =1,2,3,\ldots;\\
\end{array}
\right\}
$$

$$
\gamma(z) =
\left\{
\begin{array}{lll}
\log 2, & \forall z \in G_{0} \cup \big(\bigcup_{k =1}^{\infty} (L_{k}\cup  M_{k})\big);\\
\log(4k -6), & \forall z \in G_{k}, \; k =3,4,5\ldots;\\
\pi i +\log 6, & \forall z \in G_{2}; \\
\pi i +\log 10, & \forall z \in G_{1}; \\
\pi i + \log(4k +6), & \forall z \in B_{k}, \; k =1,2,3,\ldots;\\

\end{array}
\right\}
$$
Let us define again the following functions:
$$
\epsilon_{1}(z) =
\left\{
\begin{array}{lll}
\delta_{0}, & \forall z \in G_{0} \cup \big(\bigcup_{k =1}^{\infty} (L_{k}\cup  M_{k})\big);\\
\delta _{1}^{'}, & \forall z \in G_{k}, \; k =1,2,3,\ldots;\\
\delta _{k}, & \forall z \in B_{k}, \; k =1,2,3,\ldots;
\end{array}
\right\}
$$

$$
\epsilon_{2}(z) =
\left\{
\begin{array}{lll}
\delta_{0}, & \forall z \in G_{0} \cup \big(\bigcup_{k =1}^{\infty} (L_{k}\cup  M_{k})\big);\\
\delta_{k}^{'}, & \forall z \in G_{k}, \; k =2,3,4\ldots;\\
\delta_{1}^{'}, & \forall z \in G_{1};\\
\delta_{k}, &\forall z \in B_{k}, \; k =1,2,3,\ldots;\\

\end{array}
\right\}
$$
and
$$
\epsilon_{3}(z) =
\left\{
\begin{array}{lll}
\delta_{0}, & \forall z \in G_{0} \cup \big(\bigcup_{k =1}^{\infty} (L_{k}\cup  M_{k})\big);\\
\delta_{k}^{''}, & \forall z  \in G_{k}, \; k = 3,4,5\ldots;\\
\delta_{1}^{'}, & \forall z \in G_{2}; \\
\delta_{2}^{'}, & \forall z \in G_{1}; \\
\delta_{k}^{'}, & \forall z \in B_{k}, \; k =1,2,3,\ldots; \\

\end{array}
\right\}
$$
Clearly, the functions $ \alpha(z), \; \beta(z) $ and $ \gamma (z) $ are piece wise constant functions, so they are continuous on the set $ E $ and analytic in $ E^{\circ} $. Also,
since $ E $ is a Carleman set, so there exist entire functions $ f_{1}(z), \;  g_{1}(z)$ and $ h_{1}(z) $ such that 
$$
\forall z \in E, \;\; |f_{1}(z) -\alpha(z)| \leq \epsilon_{1}(z),\;|g_{1}(z) -\beta(z)| \leq \epsilon_{2}(z)\;  \text{and}\; |h_{1}(z) -\gamma(z)| \leq \epsilon_{3}(z).
$$
Consequently, we get transcendental entire functions $ f(z) = e^{f_{1}(z)},\; g(z) = e^{g_{1}(z)} $  and $ h(z) = e^{h_{1}(z)} $ which respectively satisfy the following:	
\begin{equation}\label{cs19}
\left\{
\begin{array}{lll}
|f(z) -2|< 1/2,\; \; \;\;  &  \forall z \in G_{0} \cup\big(\bigcup_{k =1}^{\infty} (L_{k}\cup  M_{k})\big);\\
|f(z) +6|< 1/2,\;\;\; &  \forall z \in G_{k}, \; k =1,2,3\ldots;\\ 
|f(z) +(4k+6)|< 1/2, \;\; \;&   \forall z \in B_{k}, \; k =1,2,3,\ldots; 
\end{array}
\right\}
\end{equation}
		
\begin{equation}\label{cs20}
\left\{
\begin{array}{lll}
|g(z) -2|< 1/2,  \;\;\;& \forall z \in G_{0} \cup \big( \bigcup_{k =1}^{\infty} (L_{k}\cup  M_{k})\big);\\
|g(z) -(4k-2)|< 1/2, \;\;\;&   \forall z \in G_{k}, \; k =2,3,4,\ldots;\\
 |g(z) +6|< 1/2,\;\;\; & \forall z \in G_{1};\\
|g(z) +(4k+6)|< 1/2, \;\;\; &  \forall z \in B_{k}, \; k =1,2,3,\ldots; \\
 
\end{array}
\right\}
\end{equation}	
and 
\begin{equation}\label{cs21}
\left\{
\begin{array}{lll}
|h(z) -2|< 1/2, \;\;\; &  \forall z \in G_{0} \cup \bigcup_{k =1}^{\infty} (L_{k}\cup  M_{k});\\
|h(z) - (4k-6)|< 1/2, \;\;  & \forall z \in G_{k}, k =3,4,5\ldots;\\
|h(z) +6|<1/2, \;\; & \forall z \in G_{2};\\
|h(z) +10|<1/2, \;\; & \forall z \in G_{1};\\

|h(z) +(4k+6)|< 1/2, \;\;\; &  \forall z \in B_{k}, k =1,2,3\ldots;\\
\end{array}
\right\}
\end{equation}
As we did just above in \ref{cs19}, \ref{cs20}, and \ref{cs21}, each of the functions $ f $, $ g $ and $h$ maps the domain $ G_{0} \cup \bigcup_{k =1}^{\infty} (L_{k}\cup  M_{k}) $ into smaller disk $ |z -2|< 1/2 $ contained in $ G_{0} $ and each of these function is a contracting mapping. So, $ G_{0} \cup \big(\bigcup_{k =1}^{\infty} (L_{k}\cup M_{k})\big) $ contains a fixed point $ z_{0}$ (say) such that 
$$
f^{n}(G_{0} \cup \bigcup_{k =1}^{\infty} (L_{k}\cup  M_{k})) \longrightarrow z_{0} \; \text{as}\; n \longrightarrow \infty.
$$
$$
g^{n}(G_{0} \cup \bigcup_{k =1}^{\infty} (L_{k}\cup  M_{k})) \longrightarrow z_{0} \; \text{as}\; n \longrightarrow \infty.
$$
$$
h^{n}(G_{0} \cup \bigcup_{k =1}^{\infty} (L_{k}\cup  M_{k})) \longrightarrow z_{0} \; \text{as}\; n \longrightarrow \infty.
$$
This fixed point $ z_{0} $ is attracting fixed point for each function $ f $, $ g $ and $h$, so $ G_{0} \cup \big(\bigcup_{k =1}^{\infty} (L_{k}\cup  M_{k})) $ lies in attracting cycle and hence $ G_{0} \cup\big( \bigcup_{k =1}^{\infty} (L_{k}\cup M_{k})) $ is a subset of each of the Fatou set $ F(f), F(g) $ and $ F(h) $. In this case, $ J(f) \neq \mathbb{C}, \; J(g) \neq \mathbb{C} $ and $ J(h) \neq \mathbb{C} $ and so Julia set of each of the function $ f $, $ g $ and $h$ does not contain interior point and hence Fatou set of each of these function contains all interior points. In such case, Fatou set of each of the function $ f $, $ g $ and $h$ contains Carleman set $ E $.

Again, as defined in above equation \ref{cs19}, function $ f $ maps each $ G_{k} $ into smaller disk contained in $ B_{1} $ and each $ B_{k} $ into smaller disk contained in $ B_{k +1} $. In fact, $ G_{k} $ and $ B_{k} $ are contained in the wandering components of Fatou set $ F(f) $ of the function $ f $. Also, as defined in equation \ref{cs20}, function $ g $ maps each of the domains $ G_{k} $ into the smaller disk contained in $ G_{k -1},\ (k =2,3,4,\ldots)$,  $ G_{1} $ into smaller disk contained in $ B_{1} $ and  $ B_{k}, \; (k =1,2,3, \ldots) $ into the smaller disks contained in $ B_{k+1} $. In fact, $ G_{k}$ and $ B_{k} $ are contained in the wandering components of the Fatou set $ F(g) $ of the function $ g $. Likewise, as defined in equation  \ref{cs21}, domains $ G_{k} $ and $ B_{k}, (k =1,2,3 \ldots) $ are contained in the wandering components  under the function $ h $. 

Next, we examine the dynamical behavior of composites of the functions $ f $, $ g $ and $h$. The composite of any two and all of three of these functions satisfy the following:\\

\underline{Dynamical behavior of $f \circ g$}:\\
\begin{equation} \label{cs22}
\left\{
\begin{array}{lll}
|(f \circ g)(z) -2|< 1/2,  \;\;\;& \forall z \in G_{0} \cup \big(\bigcup_{k =1}^{\infty} (L_{k}\cup  M_{k})\big);\\
|(f \circ g)(z) + 6|< 1/2, \;\;\;&   \forall z \in G_{k}, \; k =2,3,4\ldots;\\
 |(f\circ g)(z) + 10|< 1/2, \;\;\; &   \forall z \in G_{1};  \\
|(f \circ g)(z) -(4k+10)|< 1/2, \;\;\; &  \forall z \in B_{k}, \; k =1,2,3,\ldots; \\

\end{array}
\right\}
\end{equation}
This composition rule \ref{cs22} shows that the domains $G_{0} \cup \big(\bigcup_{k =1}^{\infty} (L_{k}\cup  M_{k})\big)  $, $ G_{k} $ and $ B_{k}, \; (k =1,2,3,\ldots) $ belong to $ F(f \circ g) $ and in fact, each $ G_{k} $ and $ B_{k}$ is contained in the wandering   components of  $ F(f \circ g) $. \\

\underline{Dynamical behavior of  $g \circ f$}:\\	
\begin{equation} \label{cs23}
\left\{
\begin{array}{lll}
|(g \circ f)(z) -2|< 1/2,  \;\;\;& \forall z \in G_{0} \cup \big(\bigcup_{k =1}^{\infty} (L_{k}\cup  M_{k})\big);\\
|(g \circ f)(z) + 10|< 1/2, \;\;\;&   \forall z \in G_{k}, \; k =1, 2,3,\ldots;\\
|(g \circ f)(z) + (4k+10)|< 1/2, \;\;\; &  \forall z \in B_{k}, \; k =1,2,3,\ldots; \\

\end{array}
\right\}
\end{equation}
From this composition rule \ref{cs23}, we can say that the domains $G_{0} \cup \big(\bigcup_{k =1}^{\infty} (L_{k}\cup  M_{k})\big)$, $ G_{k} $ and $ B_{k}, \; (k =1,2,3,\ldots) $ belong to $ F(g \circ f) $ and in fact, each $ G_{k} $ and $ B_{k}$ belongs to the wandering component of $ F(g \circ f) $.\\

\underline{Dynamical behavior of $f \circ h$}:\\	
\begin{equation} \label{cs24}
\left\{
\begin{array}{lll}
|(f \circ h)(z) -2|< 1/2,  \;\;\;& \forall z \in G_{0} \cup \big(\bigcup_{k =1}^{\infty} (L_{k}\cup  M_{k})\big);\\
|(f \circ h)(z) + 14|< 1/2, \;\;\;&   \forall z \in G_{1};\\
|(f \circ h)(z) + 10|< 1/2, \;\;\;&   \forall z \in G_{2};\\
 |(f \circ h)(z) + 6|< 1/2, \;\;\;&   \forall z \in G_{k}, \; k =4, 5, 6,\ldots;\\
 |(f \circ h)(z) + (4k+10)|< 1/2, \;\;\; &  \forall z \in B_{k}, \; k =1,2,3,\ldots; \\

\end{array}
\right\}
\end{equation}
As defined in the above composition rule \ref{cs24}, the domains $G_{0} \cup \big(\bigcup_{k =1}^{\infty} (L_{k}\cup  M_{k})\big)$, $ G_{k} $ and $ B_{k}, \; (k =1,2,3,\ldots) $ belong to $ F(f \circ h) $ and in fact, each $ G_{k} $ and $ B_{k}$ for all $k =1,2,3, \ldots  $ belongs to the wandering  components of  $ F(f \circ h) $. \\
 
\underline{Dynamical behavior of  $h \circ f$}:\\	
\begin{equation} \label{cs25}
\left\{
\begin{array}{lll}
|(h \circ f)(z) -2|< 1/2,  \;\;\;& \forall z \in G_{0} \cup \big(\bigcup_{k =1}^{\infty} (L_{k}\cup  M_{k})\big);\\
|(h \circ f)(z) + 10|< 1/2, \;\;\;&   \forall z \in G_{k}, \; k =1, 2, 3,\ldots;\\
 |(h \circ f)(z) + (4k+10)|< 1/2, \;\;\; &  \forall z \in B_{k}, \; k =1,2,3,\ldots; \\

\end{array}
\right\}
\end{equation}
From this composition rule \ref{cs25}, we can say that the domains $G_{0} \cup \big(\bigcup_{k =1}^{\infty} (L_{k}\cup  M_{k})\big)$, $ G_{k} $ and $ B_{k}$ for all $k =1,2,3,\ldots $ belong to $ F(h \circ f) $ and in fact, each $ G_{k} $ and $ B_{k}$  for all $k =1,2,3,\ldots $ is contained in the wandering components of  $ F(h \circ f) $.\\ 
 
\underline{Dynamical behavior of $g \circ h$}:\\	
\begin{equation} \label{cs26}
\left\{
\begin{array}{lll}
|(g \circ h)(z) -2|< 1/2,  \;\;\;& \forall z \in G_{0} \cup \big(\bigcup_{k =1}^{\infty} (L_{k}\cup  M_{k})\big);\\
|(g\circ h)(z) + 14|< 1/2, \;\;\; &   \forall z \in G_{1}; \\
|(g\circ h)(z) + 10|< 1/2, \;\;\; &   \forall z \in G_{2}; 
\\
|(g\circ h)(z) + 6|< 1/2, \;\;\; &   \forall z \in G_{k}; \; k =3, 4, 5 \ldots; \\
|(g \circ h)(z) + (4k+10)|< 1/2, \;\;\; &  \forall z \in B_{k}, \; k =1,2,3,\ldots; \\

\end{array}
\right\}
\end{equation}
As defined in the above composition rule \ref{cs26}, the domains $G_{0} \cup \big(\bigcup_{k =1}^{\infty} (L_{k}\cup  M_{k})\big)$, $ G_{k} $ and $ B_{k}, \; (k =1,2,3,\ldots) $ belong to $ F(g \circ h) $ and in fact, each $ G_{k} $ and $ B_{k}$ for all $ k =1,2,3, \ldots $ is contained in wandering components of  $ F(g \circ h) $. \\

\underline{Dynamical behavior of $h \circ g$}:\\	
\begin{equation} \label{cs27}
\left\{
\begin{array}{lll}
|(h \circ g)(z) -2|< 1/2,  \;\;\;& \forall z \in G_{0} \cup \big(\bigcup_{k =1}^{\infty} (L_{k}\cup  M_{k})\big);\\
|(h\circ g)(z) + 10|< 1/2, \;\;\; &   \forall z \in G_{k} , k =1,2; \\
|(h\circ g)(z) + 6|< 1/2, \;\;\; &   \forall z \in G_{3} , ; \\
|(h \circ g)(z) -(4k-10)|< 1/2, \;\;\;&   \forall z \in G_{k}, \; k =4, 5, 6,\ldots;\\
 |(h \circ g)(z) + (4k+10)|< 1/2, \;\;\; &  \forall z \in B_{k}, \; k =1,2,3,\ldots; \\

\end{array}
\right\}
\end{equation}
As defined in the above composition rule \ref{cs27}, the domains $G_{0} \cup \big(\bigcup_{k =1}^{\infty} (L_{k}\cup  M_{k})\big)  $, $ G_{k} $ and $ B_{k}, \; (k =1,2,3,\ldots) $ belong to $ F(h \circ g) $ and in fact, each $ G_{k} $ and $ B_{k}$ for all $ k =1,2,3, \ldots $ is contained in wandering components of  $ F(h \circ g) $. \\

\underline{Dynamical behavior of $f \circ g \circ h$}:\\	
\begin{equation} \label{cs28}
\left\{
\begin{array}{lll}
|(f \circ g \circ h)(z) -2|< 1/2,  \;\;\;& \forall z \in G_{0} \cup \big(\bigcup_{k =1}^{\infty} (L_{k}\cup  M_{k})\big);\\
|(f\circ g\circ h)(z) +18|< 1/2, \;\;\; &   \forall z \in G_{1}; \\
|(f\circ g\circ h)(z) +14|< 1/2, \;\;\; &   \forall z \in G_{2}; \\
|(f\circ g\circ h)(z) +10|< 1/2, \;\;\; &   \forall z \in G_{3}; \\
|(f \circ g \circ h)(z) + 6|< 1/2, \;\;\;&   \forall z \in G_{k}, \; k =4, 5, 6,\ldots;\\
|(f \circ g\circ  h)(z) + (4k +10)|< 1/2, \;\;\;&   \forall z \in B_{k}, \; k =1,2,3, \ldots; \\

\end{array}
\right\}
\end{equation}
The composition rule \ref{cs28} assigned above tells us  that domains $G_{0} \cup \big(\bigcup_{k =1}^{\infty} (L_{k}\cup  M_{k})\big)  $, $ G_{k} $ and $ B_{k}, \; (k =1,2,3,\ldots) $ lie in $ F(f \circ g \circ h) $ and in fact, each $ G_{k} $ and $ B_{k}$ for all $ k =1,2,3, \dots  $ is contained in the wandering component of  $ F(f \circ g \circ h) $. \\

\underline{Dynamical behavior of $f \circ h \circ g$}:\\	
\begin{equation} \label{cs29}
\left\{
\begin{array}{lll}
|(f \circ h \circ g)(z) -2|< 1/2,  \;\;\;& \forall z \in G_{0} \cup \big(\bigcup_{k =1}^{\infty} (L_{k}\cup  M_{k})\big);\\
|(f\circ h\circ g)(z) +14|< 1/2, \;\;\; &   \forall z \in G_{k},\; \text{for} \; k =1,2; \\
|(f\circ h\circ g)(z) +10|< 1/2, \;\;\; &   \forall z \in G_{3},\; \text{for} \; k =1,2; \\
|(f \circ h \circ g)(z) + 6|< 1/2, \;\;\;&   \forall z \in G_{k}, \; k =4, 5, 6,\ldots;\\
 |(f \circ h\circ  g)(z) + (4k +10)|< 1/2, \;\;\;&   \forall z \in B_{k}, \; k =1,2,3, \ldots; \\
\end{array}
\right\}
\end{equation}
The composition rule \ref{cs29} assigned above tells us  that domains $G_{0} \cup \big(\bigcup_{k =1}^{\infty} (L_{k}\cup  M_{k}) \big) $, $ G_{k} $ and $ B_{k}, \; (k =1,2,3,\ldots) $ lie in $ F(f \circ g \circ h) $ and in fact, each $ G_{k} $ and $ B_{k}$ for all $ k =1,2,3, \dots  $ is contained in the wandering component of  $ F(f \circ h \circ g) $. \\

\underline{Dynamical behavior of $g \circ f \circ h$}:\\	
\begin{equation} \label{cs30}
\left\{
\begin{array}{lll}
|(g \circ f \circ h)(z) -2|< 1/2,  \;\;\;& \forall z \in G_{0} \cup \big(\bigcup_{k =1}^{\infty} (L_{k}\cup  M_{k})\big);\\
|(g \circ f \circ h)(z) + 18|< 1/2, \;\;\;&   \forall z \in G_{1}; \\
|(g \circ f \circ h)(z) + 14|< 1/2, \;\;\;&   \forall z \in G_{2};\\
|(g \circ f \circ h)(z) + 10|< 1/2, \;\;\;&   \forall z \in G_{k}, \; k =3, 4, 5, \ldots;\\
 |(g \circ f\circ  h)(z) + (4k +10)|< 1/2, \;\;\;&   \forall z \in B_{k}, \; k =1,2,3, \ldots; \\

\end{array}
\right\}
\end{equation}
The composition rule \ref{cs30} assigned above tells us  that domains $G_{0} \cup \big(\bigcup_{k =1}^{\infty} (L_{k}\cup  M_{k})\big) $, $ G_{k} $ and $ B_{k}, \; (k =1,2,3,\ldots) $ lie in $ F(g \circ f \circ h) $ and in fact, each $ G_{k} $ and $ B_{k}$ for all $ k =1,2,3, \dots  $ is contained in the wandering component of  $ F(g \circ f \circ h) $. \\

\underline{Dynamical behavior of $g \circ h \circ f$}:\\	
\begin{equation} \label{cs31}
\left\{
\begin{array}{lll}
|(g \circ h \circ f)(z) -2|< 1/2,  \;\;\;& \forall z \in G_{0} \cup \big(\bigcup_{k =1}^{\infty} (L_{k}\cup  M_{k})\big);\\
|(g \circ h \circ f)(z) + 14|< 1/2, \;\;\;&   \forall z \in G_{k}, \; k = 1, 2, 3, \ldots;\\
 |(g \circ h\circ  f)(z) + (4k +10)|< 1/2, \;\;\;&   \forall z \in B_{k}, \; k =1,2,3,\ldots; \\

\end{array}
\right\}
\end{equation}
The composition rule \ref{cs31} assigned above tells us  that domains $G_{0} \cup \big(\bigcup_{k =1}^{\infty} (L_{k}\cup  M_{k})\big)  $, $ G_{k} $ and $ B_{k}, \; (k =1,2,3,\ldots) $ lie in $ F(g \circ h \circ f) $ and in fact, each $ G_{k} $ and $ B_{k}$ for all $ k =1,2,3, \dots  $ is contained in the wandering component of  $ F(g \circ h \circ f) $. \\

\underline{Dynamical behavior of $h \circ f \circ g$}:\\	
\begin{equation} \label{cs32}
\left\{
\begin{array}{lll}
|(h \circ f \circ g)(z) -2|< 1/2,  \;\;\;& \forall z \in G_{0} \cup \big(\bigcup_{k =1}^{\infty} (L_{k}\cup  M_{k})\big);\\
|(h \circ f \circ g)(z) + 14|< 1/2, \;\;\;&   \forall z \in G_{1};\\
|(h \circ f \circ g)(z) + 10|< 1/2, \;\;\;&   \forall z \in G_{k}, \; k =2, 3, 4,\ldots;\\

|(h \circ f\circ  g)(z) + (4k +10)|< 1/2, \;\;\;&   \forall z \in B_{k}, \; k =1,2,3, \ldots; \\

\end{array}
\right\}
\end{equation}
The composition rule \ref{cs32} assigned above tells us  that domains $G_{0} \cup \big(\bigcup_{k =1}^{\infty} (L_{k}\cup  M_{k})\big)  $, $ G_{k} $ and $ B_{k}, \; (k =1,2,3,\ldots) $ lie in $ F(h \circ f \circ g) $ and in fact, each $ G_{k} $ and $ B_{k}$ for all $ k =1,2,3, \dots  $ is contained in the wandering component of  $ F(h \circ f \circ g) $. \\

\underline{Dynamical behavior of $h \circ g \circ f$}:\\	
\begin{equation} \label{cs33}
\left\{
\begin{array}{lll}
|(h \circ g \circ f)(z) -2|< 1/2,  \;\;\;& \forall z \in G_{0} \cup \big(\bigcup_{k =1}^{\infty} (L_{k}\cup  M_{k})\big);\\
|(h \circ g \circ f)(z) + 14|< 1/2, \;\;\;&   \forall z \in G_{k}, \; k =1,2,3,\ldots;\\
 |(h \circ g\circ  f)(z) + (4k +10)|< 1/2, \;\;\;&   \forall z \in B_{k}, \; k =1,2,3, \ldots; \\

\end{array}
\right\}
\end{equation}
The composition rule \ref{cs33} assigned above tells us  that domains $G_{0} \cup \big(\bigcup_{k =1}^{\infty} (L_{k}\cup  M_{k})\big)  $, $ G_{k} $ and $ B_{k}, \; (k =1,2,3,\ldots) $ lie in $ F(h \circ g \circ f) $ and in fact, each $ G_{k} $ and $ B_{k}$ for all $ k =1,2,3, \dots  $ is contained in the wandering component of  $ F(h \circ g \circ f) $. 

From all of the above discussion, we found that the domains $ G_{k} $ and $ B_{k} $ for all $ k =1,2,3, \ldots $ are contained in the wandering domains of the functions $ f, \; g, \; h $ and their composites.

\end{proof}

\end{document}